\date{}
\newtheorem{theorem}{Theorem}[section]
\newtheorem*{theorem*}{Theorem}
\newtheorem{lemma}[theorem]{Lemma}
\newtheorem{cor}[theorem]{Corollary}
\theoremstyle{definition}
\theoremstyle{plain}
\newcommand{\N}{\mathbb{N}}
\newcommand{\E}{\mathbb{E}}
\newcommand{\R}{\mathbb{R}}
\newcommand{\degreeset}{{\rm \bf Deg}}
\newcommand{\Event}{\mathcal{E}}
\newcommand{\A}{\mathcal{A}}
\def\Prob{{\mathbb P}}
\def\Col{{\rm col}}
\def\Row{{\rm row}}
\title{On the norm of a random jointly exchangeable matrix}
\author{Konstantin Tikhomirov and Pierre Youssef}
\begin{document}

\maketitle

\abstract{
In this note, we show that the norm of an $n\times n$ random jointly exchangeable matrix 
with zero diagonal can be estimated
in terms of the norm of its $n/2\times n/2$ submatrix located in the top right corner. 
As a consequence,
we prove a relation between the second largest singular values of a random matrix with constant row and column
sums and its top right $n/2\times n/2$ submatrix.
The result has an application to estimating the spectral gap of random undirected $d$-regular graphs in
terms of the second singular value of {\it directed} random graphs with predefined degree sequences.
}

\section{Introduction}

Given an $n\times n$ random matrix $M=(M_{ij})$ and a permutation $\sigma$ on $n$ elements,
we denote by $\sigma(M)$ the $n\times n$ matrix
$$(M_{\sigma(i)\sigma(j)})_{i,j=1}^n,$$
i.e.\ the $(i,j)$-th element of $\sigma(M)$ is equal to $M_{\sigma(i)\sigma(j)}$.
Further, we say that the matrix $M$ is \textit{jointly exchangeable} if
$M$ is equidistributed with $\sigma(M)$ for any non-random
permutation $\sigma$. Note that if $B$ is any random matrix and $\sigma$ is a random uniform permutation
independent of $B$ then $\sigma(B)$ is jointly exchangeable.

The jointly exchangeable matrices (arrays) have been previously studied in literature;
see, in particular, paper \cite{hoover} and \cite[Chapter~7]{kallenberg}.
Let us emphasize that the above definition is different from the notion
of a {\it separately exchangeable matrix} as well as a matrix with exchangeable entries.
In the former case, we assume that $M$ is equidistributed with $(M_{\sigma(i)\pi(j)})_{i,j=1}^n$
for any two permutations $\sigma$ and $\pi$, whereas the latter means that $M$, considered
as a sequence of $n^2$ elements, is exchangeable.
We refer to \cite{aldous,hoover} for a discussion of separately exchangeable arrays,
and to book \cite{kallenberg} for extensive information on the subject.

Limiting properties of the spectral distribution of random matrices with exchangeable entries
were considered, in particular, in \cite{chatterjee, ACW}.
In this note, we are interested in the problem of estimating the spectral norm of a {\it jointly exchangeable} matrix
in terms of the norm of its submatrix located in the top right corner.
Motivation for such a specific setting comes from a problem in the spectral theory of random graphs,
and can be seen in a more general context as a way to ``de-symmetrize'' a random matrix.
We will return to this in the second part of the introduction.

Let us note that the problem has a trivial solution if instead of joint exchangeability we consider
a {\it separately exchangeable} matrix.
Namely, if $M$ is separately exchangeable then it is easy to see that, 
denoting by $M^{(k\ell)}$ ($1\leq k,\ell\leq 2$) its block decomposition into four $n/2\times n/2$
submatrices, all the blocks $M^{(k\ell)}$ are equidistributed.
Hence, in view of the triangle inequality,
$$\Prob\{\|M\|\geq t\}\leq \Prob\{\|M^{(11)}\|+\|M^{(12)}\|+\|M^{(21)}\|+\|M^{(22)}\|\geq t\}\leq 4\Prob\{\|M^{(12)}\|\geq t/4\}$$
for any $t\geq 0$.
In the jointly exchangeable model, however, the principal submatrices $M^{(11)}$
and $M^{(22)}$ are generally not equidistributed with $M^{(12)}$, $M^{(21)}$,
and the above argument fails.

We address the problem in the following theorem:

\begin{theorem}\label{th: main}
There exists a universal constant $c_{\text{\tiny{\ref{th: main}}}}\in(0,1)$ with the following property. 
Let $n\geq 8$; let $M$ be a random $n\times n$ matrix with zero diagonal
and let $\sigma$ be the uniform random permutation of $\{1,2,\ldots, n\}$ independent of $M$. 
Further, let $\Event$ be an event such that $\Prob(\Event\,|\,M)\geq 1-c_{\text{\tiny{\ref{th: main}}}}$ everywhere on the probability space. 
Then, denoting by $T$ the $\lfloor n/2\rfloor\times\lfloor n/2\rfloor$ top right corner of $\sigma(M)$,
we have
$$\Prob\bigl\{\|M\|\geq \tau\bigr\}
\leq \frac{1}{c_{\text{\tiny{\ref{th: main}}}}}\Prob\bigl\{\|T\|\geq c_{\text{\tiny{\ref{th: main}}}}\tau\mbox{ AND }\Event\mbox{ holds}\bigr\}$$
for all $\tau>0$.
\end{theorem}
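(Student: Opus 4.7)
The plan is to reduce Theorem~\ref{th: main} to a deterministic claim about uniformly random off-diagonal submatrices of a fixed zero-diagonal matrix, and then combine this with the hypothesis on $\Event$ via a union bound. The first (easy) reduction is the following: since $\sigma$ is uniform on permutations and independent of $M$, the sets $I:=\sigma(\{1,\ldots,\lfloor n/2\rfloor\})$ and $J:=\sigma(\{n-\lfloor n/2\rfloor+1,\ldots,n\})$ form a uniformly random ordered pair of disjoint $\lfloor n/2\rfloor$-subsets of $\{1,\ldots,n\}$, independent of $M$; and $T$ equals the submatrix $M_{I,J}$ up to a simultaneous reordering of rows and columns, so $\|T\|=\|M_{I,J}\|$. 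It therefore suffices to bound $\|M_{I,J}\|$ from below with constant probability conditionally on $M$.

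The heart of the argument is the following deterministic claim: there exist universal constants $c,c_0>0$ such that for every $n\times n$ matrix $A$ with zero diagonal and $n\geq 8$,
$$\Prob_{(I,J)}\bigl(\|A_{I,J}\|\geq c\|A\|\bigr)\geq c_0,$$
where $(I,J)$ is as above. To prove it, pick unit vectors $x^*,y^*$ with $\langle Ax^*,y^*\rangle=\|A\|$ and set
$$S:=\sum_{i\in I,\,j\in J}A_{ij}\,x^*_j\,y^*_i.$$
Two observations are key. Deterministically, $|S|\leq\|A_{I,J}\|\cdot\|x^*|_J\|\cdot\|y^*|_I\|\leq\|A_{I,J}\|\leq\|A\|$, so any lower bound on $|S|$ transfers to $\|A_{I,J}\|$. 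Probabilistically, each ordered pair $(i,j)$ with $i\neq j$ lies in $I\times J$ with the same probability $p:=\lfloor n/2\rfloor^2/(n(n-1))$, which one checks is at least $2/9$ for $n\geq 8$. Crucially, the zero diagonal of $A$ kills the diagonal contributions in $\langle Ax^*,y^*\rangle$, so $\E S=p\,\langle Ax^*,y^*\rangle=p\|A\|$. Combining $|S|\in[0,\|A\|]$ with $\E|S|\geq\E S=p\|A\|$, a reverse Markov inequality yields $\Prob(|S|\geq p\|A\|/2)\geq p/(2-p)$, and the claim follows.

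To conclude, I would integrate over $M$. On the event $\{\|M\|\geq\tau\}$, the claim applied to $A=M$ (together with the independence of $\sigma$ from $M$) gives $\Prob(\|T\|\geq c\tau\mid M)\geq c_0$, while by hypothesis $\Prob(\Event\mid M)\geq 1-c_{\text{\tiny{\ref{th: main}}}}$. A union bound yields
$$\Prob\bigl(\{\|T\|\geq c\tau\}\cap\Event\,\big|\,M\bigr)\geq c_0-c_{\text{\tiny{\ref{th: main}}}}$$
on $\{\|M\|\geq\tau\}$; integrating and taking $c_{\text{\tiny{\ref{th: main}}}}$ smaller than $\min(c,c_0/2)$ proves the theorem.

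The main obstacle is the deterministic claim. Without the zero-diagonal assumption, $\E S$ would instead equal $p\bigl(\|A\|-\sum_i A_{ii}x^*_iy^*_i\bigr)$, and the diagonal correction can be as large as $\|A\|$ itself (for instance when $A$ is essentially diagonal), causing the reverse Markov step to collapse. The remaining pieces---identifying $T$ with $M_{I,J}$ and the concluding union bound---are routine.
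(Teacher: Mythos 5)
Your proof is correct, and the key step is genuinely different from the paper's. The paper proves its central Lemma~\ref{l: submatrix} by testing the random corner $T$ against the restriction of the \emph{right} singular vector only: each row's partial sum $\sum_{j}M_{\sigma(i)\sigma(j)}x_{\sigma(j)}$ over the random column set is handled by the anti-concentration Lemma~\ref{l: subsum} (a second/fourth moment computation plus Paley--Zygmund for sums over uniform random subsets), and then Paley--Zygmund is applied a second time to the resulting sum of squares over rows. You instead test against \emph{both} singular vectors at once via the sampled bilinear form $S=\sum_{i\in I,\,j\in J}A_{ij}x^*_jy^*_i$; since every off-diagonal pair lands in $I\times J$ with the same probability $p=\lfloor n/2\rfloor^2/(n(n-1))\geq 2/9$ and the diagonal terms vanish, you get $\E S=p\,\|A\|$ exactly, and the deterministic bound $|S|\leq\|A_{I,J}\|\leq\|A\|$ plus a reverse Markov inequality finishes the job. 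This linearization makes Lemma~\ref{l: subsum} (and both Paley--Zygmund applications) unnecessary and yields explicit constants ($c=1/9$, $c_0=1/8$), whereas the paper's route keeps the constants implicit; the zero-diagonal hypothesis is used in exactly the same essential way in both arguments, namely to ensure that the terms which can never appear in an off-diagonal corner do not contribute to $\langle Ax^*,y^*\rangle$. Your reduction to $M_{I,J}$ and the concluding conditional union bound match the paper's proof of Theorem~\ref{th: main} from its lemma; the only nitpick is that the rows and columns of $T$ are reordered by two different bijections rather than ``simultaneously,'' which of course still leaves the spectral norm unchanged.
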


Let us note that we did not attempt to evaluate the constant $c_{\text{\tiny{\ref{th: main}}}}$.
Event $\Event$ in the above theorem provides additional flexibility,
which will be important in applications. In the particular case when $\Event$ coincides with the entire probability
space, we obtain the following
\begin{cor}
Let $n\geq 8$, let $M$ be an $n\times n$ jointly eachangeable random matrix with zero diagonal
and let $T$ be the $\lfloor n/2\rfloor\times\lfloor n/2\rfloor$ top right corner of $M$.
Then
$$\Prob\{\|M\|\geq \tau\}
\leq {c_{\text{\tiny{\ref{th: main}}}}}^{-1}\Prob\{\|T\|\geq c_{\text{\tiny{\ref{th: main}}}}\tau\},\quad \tau>0.$$
\end{cor}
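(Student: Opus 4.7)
The plan is to deduce the corollary as a direct specialization of Theorem~\ref{th: main} by taking the extra event $\Event$ to be the entire probability space and exploiting joint exchangeability to identify the top right corner of $\sigma(M)$ with that of $M$ in distribution.

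More precisely, let $M$ be jointly exchangeable with zero diagonal and let $\sigma$ be a uniform random permutation of $\{1,\ldots,n\}$ independent of $M$. Set $\Event=\Omega$, so that trivially $\Prob(\Event\,|\,M)=1\geq 1-c_{\text{\tiny{\ref{th: main}}}}$ everywhere. Let $T'$ denote the $\lfloor n/2\rfloor\times\lfloor n/2\rfloor$ top right corner of $\sigma(M)$. Theorem~\ref{th: main} applied to $M$ and $\sigma$ then yields
$$\Prob\bigl\{\|M\|\geq\tau\bigr\}\leq c_{\text{\tiny{\ref{th: main}}}}^{-1}\,\Prob\bigl\{\|T'\|\geq c_{\text{\tiny{\ref{th: main}}}}\tau\bigr\},\qquad \tau>0.$$

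Next I would argue that $T'$ has the same distribution as the top right corner $T$ of $M$ itself. Since $M$ is jointly exchangeable, $\pi(M)\stackrel{d}{=}M$ for every deterministic permutation $\pi$; conditioning on $\sigma=\pi$ and using independence of $\sigma$ and $M$, one obtains $\sigma(M)\stackrel{d}{=}M$. Restricting to the top right $\lfloor n/2\rfloor\times\lfloor n/2\rfloor$ block is a deterministic map, so $T'\stackrel{d}{=}T$, and in particular $\Prob\{\|T'\|\geq c_{\text{\tiny{\ref{th: main}}}}\tau\}=\Prob\{\|T\|\geq c_{\text{\tiny{\ref{th: main}}}}\tau\}$. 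Substituting into the inequality above gives the desired bound.

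There is essentially no obstacle here; the work has already been done in Theorem~\ref{th: main}, and the only point to verify carefully is that the independence of $\sigma$ from the jointly exchangeable matrix $M$ is enough to guarantee $\sigma(M)\stackrel{d}{=}M$, which is immediate by conditioning. The auxiliary event $\Event$ in Theorem~\ref{th: main} plays no role in this corollary and is simply discarded.
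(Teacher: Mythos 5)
Your proposal is correct and matches the paper's intended derivation: the corollary is obtained exactly by taking $\Event$ to be the whole probability space in Theorem~\ref{th: main} and using joint exchangeability (together with independence of $\sigma$ from $M$) to identify the law of the top right corner of $\sigma(M)$ with that of $T$. The paper leaves this identification implicit, but your spelled-out version is the same argument.
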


It turns out that, under some extra assumptions, the above result can be
turned into a relation between second largest singular values.
In what follows, given an $n\times n$ matrix $M$, we denote by $s_1(M)\geq s_2(M)\geq\dots\geq s_n(M)$
its singular values arranged in non-increasing order (counting multiplicities).

Given an $n\times n$ matrix $A$ with nonnegative entries,
let us define $n$-dimensional vectors $u(A)$ and $v(A)$ via their coordinates as follows:
$$
u_i(A)=\Vert \Col_i(A)\Vert_1,\quad v_i(A)=\Vert \Row_i(A)\Vert_1,\quad i=1,\ldots,n,
$$
where $\Col_i(A)$ and $\Row_i(A)$ denote the $i$-th column and $i$-th row of $A$, respectively.
Now, given $u, v\in \R_+^n$ with $\Vert u\Vert_1=\Vert v\Vert_1$, 
we define $\A_n(u,v)$ as the set of all (non-random)
$n\times n$ matrices $A$ with nonnegative entries satisfying $u(A)=u$ and $v(A)=v$. 
When $u=v=d\, {\bf 1} =d(1,\ldots,1)$ for some $d>0$, we will use a shorter notation $\A_n(d)$
for the corresponding set.

It is easy to see that for any matrix $A$ from $\A_n(d)$,
$d$ is equal to its largest singular value $s_1(A)$, and the corresponding singular vector is ${\bf 1}$
(this can be checked, in particular, by considering the matrix $A^tA$ which has constant row and column sums equal to $d^2$).
Using the Courant--Fischer formula, or the singular value decomposition, we can express {\it the second largest} singular value of $A$ as
\begin{equation}\label{eq: s2-courant-fischer}
s_2(A)=\big\Vert A-\frac{d}{n}\, {\bf 1}\cdot{\bf 1}^t\big\Vert. 
\end{equation}
In the situation when $A$ is a random jointly exchangeable matrix with values in $\A_n(d)$,
formula \eqref{eq: s2-courant-fischer} enables us to use Theorem~\ref{th: main}
to estimate $s_2(A)$.
Generally, by passing to a submatrix of $A$ we destroy the double stochastic
structure. However, as we show below, with a high probability the submatrix will have ``almost constant''
row and column sums.
Given $m\in\N$ and two positive numbers $d$ and $\delta$, define 
\begin{align*}
\degreeset_m(d,\delta):=\Big\{&(u,v)\in \R_+^m\times \R_+^m:\, \Vert u\Vert_1=\Vert v\Vert_1\;\;\mbox{ AND }\\
&\big|\big\{i\leq m:\,\big|u_i(T)-d\big|> k\delta\big\}\big|
\leq m e^{-k^2}\mbox{ for all }k\in\N\;\;\mbox{ AND}\\
&\big|\big\{i\leq m:\,\big|v_i(T)-d\big|> k\delta\big\}\big|
\leq m e^{-k^2}\mbox{ for all }k\in\N\Big\}.
\end{align*}
Loosely speaking, when $\delta$ is small compared to $d$ and $(u,v)\in \degreeset_m(d,\delta)$,
matrices from $\A_m(u,v)$ are ``almost'' in $\A_m(d)$ 
in the sense that their row and column sums are close to $d$. 
Let us remark that the definition of $\degreeset_m(d,\delta)$ can be equivalently restated using the Orlicz norm
in $\R^n$ with the Orlicz function $\exp(t^2)$.
We are now ready to state the second main result of this note. 

\begin{theorem}\label{th: main2}
There exist positive universal constants $c,C$ such that the following holds. 
Let $n\geq C$ and let $d,\delta>0$ satisfy $d/\sqrt{\ln n}\geq C\delta>0$.
Let $A\in \A_n(d)$ be an $n\times n$ jointly exchangeable random matrix satisfying
$$C\|\Row_i(A)\|_2,\,C\|\Col_i(A)\|_2\leq \delta\;\;\;\mbox{a.s.,}\quad i\leq n.$$
Further, let $T$ be the $\lfloor n/2\rfloor\times\lfloor n/2\rfloor$ top right corner of $A$. 
Then for any $L\geq C$ we have 
\begin{align*}
\Prob\big\{s_2(A)\geq L\delta\big\}\leq \frac{1}{c}\Prob\Big\{ s_2(T)\geq cL\delta \mbox{ \ AND }
\big(u(T),v(T)\big)\in \degreeset_{\lfloor n/2\rfloor}\big(d/2,\delta\big)\Big\}.
\end{align*}
\end{theorem}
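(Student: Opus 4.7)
The plan is to reduce Theorem~\ref{th: main2} to Theorem~\ref{th: main} applied to a zero-diagonal modification of $A$. I first set
$$M := A - \tfrac{d}{n}\mathbf{1}\mathbf{1}^t - \bigl(\text{diag}(A) - \tfrac{d}{n}I\bigr),$$
which has zero diagonal and remains jointly exchangeable (the rank-one subtraction is permutation invariant, and the diagonal-correction commutes with conjugation by any permutation). By \eqref{eq: s2-courant-fischer}, $s_2(A)=\|A-(d/n)\mathbf{1}\mathbf{1}^t\|$, and since $\max_i|A_{ii}-d/n|\leq\|\Row_i(A)\|_2+d/n\leq 2\delta/C$ (using Cauchy--Schwarz to bound $d\leq\sqrt{n}\,\|\Row_i(A)\|_2$), for $L\geq C$ large enough I get $\{s_2(A)\geq L\delta\}\subseteq\{\|M\|\geq L\delta/2\}$. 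Introducing a uniform random permutation $\sigma$ independent of $A$ and letting $T$ denote the top-right $m\times m$ corner of $\sigma(A)$ with $m=\lfloor n/2\rfloor$ (which has the same joint law as the top-right corner of $A$ itself, by joint exchangeability), the top-right corner of $\sigma(M)$ equals exactly $T-(d/n)J$ with $J=\mathbf{1}_m\mathbf{1}_m^t$, because $\sigma(\text{diag}(A))$ remains diagonal and contributes nothing off-diagonal.

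I then define $\Event:=\{(u(T),v(T))\in\degreeset_m(d/2,\delta)\}$ and verify that $\Prob(\Event\mid A)\geq 1-c_{\text{\tiny{\ref{th: main}}}}$ almost surely. Conditionally on $\sigma(i)=r$, the row sum $v_i(T)$ is a sum over a uniformly random $(n-m)$-subset of $\Row_r(A)$, whose $\ell_2$ norm is at most $\delta/C$. Sub-Gaussian concentration for sums over random subsets (via negative association) then yields $\Prob_\sigma(|v_i(T)-d/2|>k\delta\mid A)\leq 2e^{-c'C^2k^2}$ for every $k\in\N$ (the conditional expectation differs from $d/2$ by at most $O(\delta)$, which is absorbed). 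A Markov bound on the number of violating indices, followed by a union over $k$, produces $\Event$ with the required probability once $C$ is a sufficiently large universal constant. Applying Theorem~\ref{th: main} to $M$ with this $\Event$ and $\tau=L\delta/2$ yields
$$\Prob\bigl\{s_2(A)\geq L\delta\bigr\}\leq\frac{1}{c_{\text{\tiny{\ref{th: main}}}}}\Prob\Bigl\{\bigl\|T-\tfrac{d}{n}J\bigr\|\geq\tfrac{c_{\text{\tiny{\ref{th: main}}}}L\delta}{2}\;\text{and}\;\Event\text{ holds}\Bigr\}.$$

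The key remaining step is to replace $\|T-(d/n)J\|$ by $s_2(T)$ on $\Event$, which I would do by contradiction: assume $s_2(T)<cL\delta$ for a small universal $c$. On $\Event$, a Rayleigh quotient test with $\mathbf{1}/\sqrt{m}$ gives $s_1(T)\geq d/2-O(\delta)$, while the trivial bound $s_1(T)\leq s_1(A)=d$ holds from the outset; together with $d/\sqrt{\ln n}\geq C\delta$ and $s_2(T)<cL\delta$, this produces a spectral gap $s_1(T)^2-s_2(T)^2\geq c'd^2$. Moreover $\mathbf{1}/\sqrt{m}$ is an approximate top eigenvector of $T^tT$ with eigenvalue $(d/2)^2$ and residual $\|T^tT\,\mathbf{1}/\sqrt{m}-(d/2)^2\mathbf{1}/\sqrt{m}\|_2\leq Cd\delta$, as a direct consequence of the row/column-sum concentration encoded in $\Event$ combined with $s_1(T)\leq d$. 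Davis--Kahan then yields $\|v_1(T)-\mathbf{1}/\sqrt{m}\|_2\leq C\delta/d$ for the top right singular vector of $T$ (sign fixed by Perron--Frobenius nonnegativity), and analogously for the left one. Decomposing any unit $x=\alpha\mathbf{1}/\sqrt{m}+x^\perp$ with $x^\perp\perp\mathbf{1}$, the smallness of $|v_1(T)^tx^\perp|$ combined with the row-sum concentration gives $\|(T-(d/n)J)x\|_2\leq s_2(T)+C''\delta$, so $\|T-(d/n)J\|\leq s_2(T)+C''\delta<c_{\text{\tiny{\ref{th: main}}}}L\delta/2$ once $L$ is a sufficiently large universal constant, contradicting the event on the right-hand side.

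The hardest ingredient is precisely this singular-vector perturbation: the spectral gap $s_1(T)^2-s_2(T)^2=\Omega(d^2)$ is only available in the ``small $s_2(T)$'' regime, and it is the hypothesis $d/\sqrt{\ln n}\geq C\delta$ that keeps $cL\delta$ well below $d/2$ and thereby rescues the gap. Once Davis--Kahan is in place, the comparison between $\|T-(d/n)J\|$ and $s_2(T)$ reduces to a few linear-algebraic estimates driven by the definition of $\degreeset_m(d/2,\delta)$.
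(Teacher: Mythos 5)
Your reduction coincides with the paper's up to the final linear--algebra step: the same zero--diagonal modification $B=A-\frac{d}{n}\mathbf{1}\mathbf{1}^t-{\rm Diag}\big(A-\frac{d}{n}\mathbf{1}\mathbf{1}^t\big)$, the same observation that the top--right corner of $\sigma(B)$ equals $T-\frac{d}{n}\mathbf{1}\mathbf{1}^t$ (the corner is entirely off--diagonal), and the same conditioning event controlled via Hoeffding for negatively associated indicators, exactly as in Lemma~\ref{lem: degree-event}. Where you genuinely diverge is in converting $\big\|T-\frac{d}{n}\mathbf{1}\mathbf{1}^t\big\|$ into $s_2(T)$. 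The paper's Lemma~\ref{lem: s2-reduction} does this deterministically for \emph{any} nonnegative matrix with almost--constant margins: it rescales to $D_v^{-1/2}TD_u^{-1/2}$, identifies $D_u^{1/2}\mathbf{1}$ as the top singular vector via Lemma~\ref{l: eigenvalue of positive vector}, and peels off the rank--one part by the singular value decomposition, giving $\big\|T-\frac{d}{n}\mathbf{1}\mathbf{1}^t\big\|\leq 2s_2(T)+O(\delta)$ with no assumption on the size of $s_2(T)$. Your Davis--Kahan route instead needs a spectral gap $s_1(T)^2-s_2(T)^2=\Omega(d^2)$ and therefore only functions in the regime $s_2(T)\ll d$, which is why you must argue by contradiction. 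The perturbation estimates themselves check out: on your event, $\|u(T)-\frac{d}{2}\mathbf{1}\|_2,\|v(T)-\frac{d}{2}\mathbf{1}\|_2\leq C\delta\sqrt{m}$, and combined with $s_1(T)\leq s_1(A)=d$ this does give the residual bound $\|T^tT\mathbf{1}/\sqrt{m}-(d/2)^2\mathbf{1}/\sqrt{m}\|_2\leq Cd\delta$ and then $\big\|T-\frac{d}{n}\mathbf{1}\mathbf{1}^t\big\|\leq s_2(T)+C''\delta$.

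There is, however, one genuine flaw in your justification of the gap. You claim that the hypothesis $d/\sqrt{\ln n}\geq C\delta$ ``keeps $cL\delta$ well below $d/2$,'' but $L$ is an arbitrary parameter $\geq C$, so this hypothesis yields no upper bound on $L\delta/d$; if $L\delta$ is comparable to $d$, your assumed bound $s_2(T)<cL\delta$ does not produce the $\Omega(d^2)$ gap. The repair is easy but must be said: when $L\delta>2d$ the left-hand side of the theorem vanishes because $s_2(A)\leq s_1(A)=d$, so one may assume $L\delta\leq 2d$, whence $cL\delta\leq d/4$ once $c\leq 1/8$ and the gap argument goes through. With that observation inserted your proof closes; the paper's scaling lemma avoids the case split altogether and is the cleaner route, while your argument has the mild advantage of using only standard eigenvector perturbation theory.
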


The above statement is used in an upcoming paper by the same authors \cite{TY}.
In fact, applying Theorem~\ref{th: main2} in \cite{TY}, we show that the spectral gap
of random $d$-regular undirected graphs in the uniform model can be bounded in terms of
the second singular value of ``almost $d$-regular'' random {\it directed} graphs.

The note is organized as follows:
In Section~2 we set the notation and
provide several auxiliary statements.
In Section~3 we prove Theorem~\ref{th: main} and complete
the paper with Section~4 with a proof of Theorem~\ref{th: main2}.
Let us emphasize that, although the primary application of all the results from this note
consists in ``de-symmetrization'' of $d$-regular random graphs, we think that
the simple arguments given here may turn out useful in other contexts, which
has been the main reason for grouping them in the separate paper.

\section{Preliminaries}
 
Given a vector $x\in\R^n$, we denote by $x_1,x_2,\dots,x_n$ its coordinates.
Further, by $\|x\|_p$ we denote the canonical $\ell_p^n$-norms ($1\leq p\leq \infty$).
Given an $n\times n$ matrix $M$, $\|M\|$ stands for the spectral norm of $M$.
A vector of ones will be denoted by ${\bf 1}$.
Given a natural number $m$, the set $\{1,2,\dots,m\}$ will sometimes be denoted by $[m]$.
For a real number $r$, $\lfloor r\rfloor$ is the largest integer not exceeding $r$ and $\lceil r\rceil$
is the smallest integer greater or equal to $r$. Given a finite set $I$, by $|I|$ we denote its cardinality.
Universal constants are denoted by $C,c$, etc.

Next, let us state (and prove) several auxiliary lemmas.
The following anti-concen\-tration estimate for a sum over a
uniform random subset of indices will be used in the proof of the main theorem. 
Its proof is quite standard, and we include it just for completeness.
\begin{lemma}\label{l: subsum}
Let $m\in \N$ ($m\geq 8$) and $1\leq k\leq \lceil m/2\rceil$. Further, let $a:=(a_i)_{i=1}^m$
be a sequence of reals, and $\mathcal S$ be a uniformly distributed random subset of $[m]$ of cardinality $k$.
Then
$$\Prob\Bigl\{\Bigl|\sum_{i\in\mathcal S}a_i\Bigr|\geq \frac{c_{\text{\tiny{\ref{l: subsum}}}}k}{m}\Bigl|\sum_{i=1}^m a_i\Bigr|\Bigr\}
\geq \frac{c_{\text{\tiny{\ref{l: subsum}}}}k}{m}$$
for a sufficiently small universal constant $c_{\text{\tiny{\ref{l: subsum}}}}>0$.
\end{lemma}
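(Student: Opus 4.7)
The plan is to apply Paley--Zygmund to the non-negative random variable $X^2$, where $X := \sum_{i \in \mathcal{S}} a_i$, and then combine the resulting lower tail bound with the elementary estimate $\E X^2 \geq (\E X)^2 = k^2 S^2/m^2$, where $S := \sum_{i=1}^m a_i$. Working with $X^2$ rather than $X$ or $|X|$ is the key choice: it sidesteps the fact that $X$ may take both signs, a regime in which a direct application of Paley--Zygmund to $X$ or $X_+$ fails (consider, e.g., $a_1$ very large, $a_2 \approx -a_1$ and the rest zero, for which $(\E X)^2/\E X^2 \to 0$).

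First I would reduce to the case $S \geq 0$ (the case $S=0$ is trivial, and flipping all signs covers $S<0$). Standard formulas give $\E X = kS/m$ and
\[
\E X^2 = \frac{k(k-1)}{m(m-1)}\,S^2 + \frac{k(m-k)}{m(m-1)}\,\sigma^2, \qquad \sigma^2 := \sum_{i=1}^m (a_i - S/m)^2.
\]
For $k \leq \lceil m/2 \rceil$ one has $(m-k)/(m-1) \geq 1/2$, so $\E X^2 \geq \max\bigl\{(kS/m)^2,\; k\sigma^2/(2m)\bigr\}$. Paley--Zygmund applied to $X^2$ yields
\[
\Prob\bigl\{X^2 \geq \tfrac{1}{2}\E X^2\bigr\} \geq \tfrac{1}{4}\,\frac{(\E X^2)^2}{\E X^4},
\]
and since $\sqrt{\E X^2/2} \geq kS/(\sqrt{2}\,m)$, the proof reduces to establishing a universal bound of the form $\E X^4 \leq C(m/k)(\E X^2)^2$.

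For the fourth-moment estimate I would invoke Hoeffding's reduction: since $t \mapsto t^4$ is convex, $\E X^4 \leq \E X_{\mathrm{wr}}^4$, where $X_{\mathrm{wr}}$ is a sum of $k$ iid draws (with replacement) from $\{a_1,\dots,a_m\}$. Writing $X_{\mathrm{wr}} = k\bar a + W$ with $\bar a := S/m$ and $W$ a sum of $k$ iid centered variables, and using the classical identity $\E W^4 = k\mu_4 + 3k(k-1)(\sigma^2/m)^2$ with $\mu_4 := m^{-1}\sum_i (a_i - \bar a)^4$, I reduce to bounding the three summands $k^4 \bar a^4$, $k\mu_4$ and $k^2\sigma^4/m^2$ separately. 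The first is $\leq (\E X^2)^2$ via $(\E X)^2 \leq \E X^2$; the third is $\leq C(\E X^2)^2$ via $\E X^2 \geq k\sigma^2/(2m)$; and the second follows from the trivial pointwise bound $\mu_4 \leq \sigma^4/m$ (which holds because $\sum b_i^4 \leq (\sum b_i^2)^2$) together again with $\E X^2 \geq k\sigma^2/(2m)$.

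The only real obstacle is verifying this fourth-moment estimate uniformly in the configuration $a_1,\dots,a_m$. The delicate regime is the heavy-tailed one in which $\sigma^2$ is dominated by a single very large $|a_i|$; but this is precisely the regime in which $X^2$ spikes with probability $\gtrsim k/m$ (namely, whenever the extreme index lands in $\mathcal{S}$), so the fourth-moment bound has just enough room to absorb it, and the rest of the argument is routine arithmetic.
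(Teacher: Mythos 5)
Your proposal is correct, and it is a genuinely different (and arguably cleaner) route than the paper's. The paper also computes the second and fourth moments of $X=\sum_{i\in\mathcal S}a_i$ and uses Paley--Zygmund, but only after splitting into two cases according to whether $\bigl|\sum_i a_i\bigr|\geq 4\sqrt{m/k}\,\|a\|_2$: in the large-sum case it applies Chebyshev to the centered variable $\eta=X-\frac{k}{m}\sum_i a_i$ to get the conclusion with probability $3/4$, and only in the small-sum case does it invoke Paley--Zygmund, where the crude bound $\E X^4\leq C\frac{k}{m}\|a\|_2^4$ (read off from an explicit combinatorial expansion of the fourth moment over the hypergeometric indicators) suffices. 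You instead prove the single uniform estimate $\E X^4\leq C\frac{m}{k}(\E X^2)^2$, which makes one application of Paley--Zygmund to $X^2$ cover all configurations at once; and you obtain the fourth moment not by the combinatorial expansion but by Hoeffding's convex-ordering comparison with sampling with replacement, followed by the classical i.i.d.\ moment identity. Your approach buys uniformity and avoids the case analysis; the paper's buys a better probability ($3/4$ rather than $ck/m$) in the large-sum regime and avoids appealing to Hoeffding's theorem. Two small points to tighten: (i) the expansion of $\E(k\bar a+W)^4$ also produces the cross terms $6k^2\bar a^2\,\E W^2$ and $4k\bar a\,\E W^3$, which you do not list; they are absorbed into your three pure terms by AM--GM and Cauchy--Schwarz (e.g.\ $|\E W^3|\leq k\sigma^3/m$), so this is indeed routine, but it should be said. (ii) Your worry about the heavy-tailed regime is unfounded: the pointwise bound $\mu_4\leq\sigma^4/m$ together with $\E X^2\geq k\sigma^2/(2m)$ already gives $k\mu_4\leq 4\frac{m}{k}(\E X^2)^2$ with no further input, so the fourth-moment estimate holds uniformly and the argument closes.
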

\begin{proof}
For each $i\leq m$, let $\chi_i$ be the indicator variable of the event $\{i\in\mathcal S\}$, so that
$\sum_{i\in\mathcal S}a_i=\sum_{i=1}^m a_i\chi_i$.
We have
\begin{align}\label{eq-second-moment-symetrization}
\E\Bigl(\sum_{i=1}^m a_i\chi_i\Bigr)^2&=\sum_{i\neq j}a_ia_j\Prob\{i,j\in\mathcal S\}+
\sum_{i=1}^m a_i^2\Prob\{i\in\mathcal S\}\nonumber\\
&=\frac{k(k-1)}{m(m-1)}\biggl(\sum_{i=1}^m a_i\biggr)^2+\frac{k}{m}\Bigl(1-\frac{k-1}{m-1}\Bigr)
\sum_{i=1}^m a_i^2,
\end{align}
where the last equality is due to the fact that $\Prob\{1,2\in\mathcal S\}=\frac{k(k-1)}{m(m-1)}$.
Next, denoting
$$t_u:=\E\prod_{v=1}^u\chi_v=\Prob\bigl\{[u]\subset\mathcal S\bigr\},\;\;\;u=1,2,3,4,$$
we can compute the fourth moment of the sum as follows:
\begin{align*}
\E\Bigl(&\sum_{i=1}^m a_i\chi_i\Bigr)^4\\
&=t_4\sum_{i\neq j\neq \ell\neq r}a_ia_ja_\ell a_r
+6t_3\sum_{i\neq j\neq \ell} a_i^2a_ja_\ell
+6t_2\sum_{i\neq j} a_i^2a_j^2
+4t_2\sum_{i\neq j} a_i^3a_j
+t_1\sum_{i=1}^m a_i^4\\
&=t_4\biggl(\sum_{i=1}^m a_i\biggr)^4
+6(t_3-t_4)\sum_{i\neq j\neq \ell} a_i^2a_ja_\ell
+6(t_2-t_4)\sum_{i\neq j} a_i^2a_j^2\\
&\hspace{2.7cm}+4(t_2-t_4)\sum_{i\neq j} a_i^3a_j
+(t_1-t_4)\sum_{i=1}^m a_i^4.
\end{align*}
Simplifying, we obtain
\begin{align*}
\E\Bigl(\sum_{i=1}^m a_i\chi_i\Bigr)^4
&=t_4\biggl(\sum_{i=1}^m a_i\biggr)^4
+6(t_3-t_4)\Bigl(\sum_{i=1}^m a_i^2\Bigr)\Bigl(\sum_{i=1}^m a_i\Bigr)^2
+6(t_2-t_3)\sum_{i\neq j} a_i^2a_j^2\\
&\hspace{1cm}+(4t_2+8t_4-12t_3)\Bigl(\sum_{i=1}^m a_i^3\Bigr)\sum_{i=1}^m a_i
+(t_1-4t_2+6t_3-3t_4)\sum_{i=1}^m a_i^4,
\end{align*}
whence, in view of the straightforward relations $t_u\leq \frac{k^u}{m^u}$,
\begin{align}\label{eq-4th-moment}
\E\Bigl(\sum_{i=1}^m a_i\chi_i\Bigr)^4
\leq
\frac{k^4}{m^4}\biggl(\sum_{i=1}^m a_i\biggr)^4
&+\frac{6k^3}{m^3}\Bigl(\sum_{i=1}^m a_i^2\Bigr)\Bigl(\sum_{i=1}^m a_i\Bigr)^2\nonumber\\
&+\frac{7k}{m}\Bigl(\sum_{i=1}^m a_i^2\Bigr)^2
+\frac{12k^2}{m^2}\Bigl(\sum_{i=1}^m a_i^2\Bigr)^{3/2}\Bigl|\sum_{i=1}^m a_i\Bigr|.
\end{align}
Finally, we consider two cases.
\begin{enumerate}
\item Assume that $\bigl|\sum\limits_{i=1}^m a_i\bigr|\geq 4\sqrt{\frac{m}{k}}\|a\|_2$.
Let $\eta:=\sum_{i\in\mathcal S}a_i-\frac{k}{m}\sum_{i=1}^m a_i$. 
From \eqref{eq-second-moment-symetrization}, we clearly have 
$$
\E \left(\sum_{i\in\mathcal S}a_i\right)^2\leq \frac{k^2}{m^2}\, \left(\sum_{i=1}^m a_i\right)^2 + \frac{k}{m} \sum_{i=1}^m a_i^2.
$$
Thus,  
$$
\E\eta^2=\E \left(\sum_{i\in\mathcal S}a_i\right)^2- \frac{k^2}{m^2}\, \left(\sum_{i=1}^m a_i\right)^2 \leq \frac{k}{m}\sum_{i=1}^m a_i^2.
$$
Using Markov's inequality together with the relation $\bigl|\sum\limits_{i=1}^m a_i\bigr|\geq 4\sqrt{\frac{m}{k}}\|a\|_2$, we get 
$$\Prob\Bigl\{|\eta|>\frac{k}{2m}\Bigl|\sum\limits_{i=1}^m a_i\Bigr|\Bigr\}
\leq\Prob\Bigl\{|\eta|>2\sqrt{\frac{k}{m}}\|a\|_2\Bigr\}\leq\frac{1}{4},$$
whence
$$\Prob\Bigl\{\Bigl|\sum_{i\in \mathcal S}a_i\Bigr|\geq\frac{k}{2m}\Bigl|\sum_{i=1}^ma_i\Bigr|\Bigr\}\geq \frac{3}{4}.$$

\item Assume that $\bigl|\sum\limits_{i=1}^m a_i\bigr|< 4\sqrt{\frac{m}{k}}\|a\|_2$. 
From \eqref{eq-second-moment-symetrization}, it is easy to see that 
$$
\E \left(\sum_{i\in\mathcal S}a_i\right)^2\geq  \frac{k^2}{2m^2}\, \left(\sum_{i=1}^m a_i\right)^2.
$$
Therefore we can write 
$$
\Prob\Bigl\{\Bigl|\sum_{i\in\mathcal S}a_i\Bigr|\geq \frac{k}{2m}\Bigl|\sum_{i=1}^m a_i\Bigr|\Bigr\}
\geq \Prob\Bigl\{\Bigl(\sum_{i\in\mathcal S}a_i\Bigr)^2\geq \frac{1}{2}\E\Bigl(\sum_{i\in\mathcal S}a_i\Bigr)^2\Bigr\}.
$$
Applying the Paley--Zygmund inequality, we deduce 
\begin{equation}\label{eq: paley-zygmund}
\Prob\Bigl\{\Bigl|\sum_{i\in\mathcal S}a_i\Bigr|\geq \frac{k}{2m}\Bigl|\sum_{i=1}^m a_i\Bigr|\Bigr\}
\geq \frac{1}{4}\biggl(\E\Bigl(\sum_{i\in\mathcal S}a_i\Bigr)^2\biggr)^2\biggl(\E\Bigl(\sum_{i\in\mathcal S}a_i\Bigr)^4\biggr)^{-1}.
\end{equation}
Using that $\bigl|\sum\limits_{i=1}^m a_i\bigr|< 4\sqrt{\frac{m}{k}}\|a\|_2$ together with \eqref{eq-4th-moment}, we get
$$
\E\Bigl(\sum_{i=1}^m a_i\chi_i\Bigr)^4\leq C\, \frac{k}{m}\Vert a\Vert_2^4,
$$
for some large positive constant $C$. Moreover, in view of \eqref{eq-second-moment-symetrization}, we  have 
$$
\E \bigg(\sum_{i\in\mathcal S}a_i\bigg)^2\geq \frac{k}{3m} \Vert a\Vert_2^2.
$$
Plugging the last two estimates into \eqref{eq: paley-zygmund}, we get the result. 
\end{enumerate}
\end{proof}

The next statement is, in a sense, converse to the last one as it establishes concentration
of the random sum around its mean.

\begin{lemma}\label{l: subsum2}
Let $m\in \N$ and $1\leq k\leq \lceil m/2\rceil$. Further, let $a:=(a_i)_{i=1}^m$
be a sequence of reals, and $\mathcal S$ be a uniformly distributed random subset of $[m]$ of cardinality $k$.
Then for any $t\geq 0$, we have  
$$
\Prob\Big\{ \Big\vert\sum_{i\in \mathcal{S}} a_i - \frac{k}{m}\sum_{i=1}^m a_i \Big\vert\geq t\Big\}
\leq  2\exp\left(-\frac{2t^2}{\Vert a\Vert_2^2}\right).
$$
\end{lemma}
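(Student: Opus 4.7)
My plan is to run a Chernoff/MGF bound on the centered sum, leveraging the negative association of uniform sampling without replacement. Writing $\chi_i := \mathbbm{1}_{i \in \mathcal{S}}$, so that $\E \chi_i = k/m$, the deviation rewrites as
$$\sum_{i \in \mathcal{S}} a_i - \frac{k}{m}\sum_{i=1}^m a_i = \sum_{i=1}^m a_i\bigl(\chi_i - k/m\bigr),$$
and the structural fact I rely on is that the indicator family $(\chi_1,\dots,\chi_m)$ is negatively associated (the classical Joag-Dev--Proschan result for uniform sampling without replacement).

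On each coordinate, $\chi_i - k/m$ is mean-zero and takes values in an interval of length $1$, so Hoeffding's lemma yields
$$\E \exp\bigl(\lambda a_i (\chi_i - k/m)\bigr) \leq \exp(\lambda^2 a_i^2/8)\qquad (\lambda \in \R).$$
Negative association then gives the product domination
$$\E \exp\Bigl(\lambda \sum_{i=1}^m a_i(\chi_i - k/m)\Bigr) \leq \prod_{i=1}^m \E \exp\bigl(\lambda a_i(\chi_i - k/m)\bigr) \leq \exp\bigl(\lambda^2 \|a\|_2^2/8\bigr).$$
Combining with Markov's inequality and choosing $\lambda = 4t/\|a\|_2^2$ gives the one-sided tail $\exp(-2t^2/\|a\|_2^2)$; running the same argument with $-\lambda$ supplies the lower tail and the factor~$2$.

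The step requiring the most attention is the NA factorization, since the product inequality for a negatively associated family is guaranteed only for functions $f_i(\chi_i)$ sharing a common monotonicity, and $x \mapsto \exp(\lambda a_i x)$ flips monotonicity with the sign of $a_i$. In the application of this lemma inside the proof of Theorem~\ref{th: main2} the coordinates $a_i$ are nonnegative (they are entries of a matrix in $\A_n(d)$), so all factors are simultaneously monotone and the factorization is immediate; equivalently, $(a_i\chi_i)$ is itself an NA family and one may invoke a Hoeffding-type bound for bounded NA random variables directly. For arbitrary real $a_i$ one can either partition $[m]$ according to the sign of $a_i$, apply the factorization within each class and recombine via Cauchy--Schwarz at the cost of a harmless constant in the exponent, or invoke Hoeffding's classical convex-order comparison between sampling with and without replacement to reduce to an i.i.d.\ MGF computation. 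Beyond this point the argument is a routine Chernoff optimization.
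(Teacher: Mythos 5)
Your argument follows the same route as the paper's: the published proof is exactly ``the indicators of sampling without replacement are negatively associated, so apply Hoeffding's inequality for NA variables to $(a_i\chi_i)$,'' with a citation to Roussas, and you have simply unpacked that citation into the explicit MGF factorization plus the Chernoff optimization. The one place you go beyond the paper is the sign caveat, and you are right to raise it: for $\lambda>0$ the map $x\mapsto e^{\lambda a_i x}$ is increasing when $a_i>0$ and decreasing when $a_i<0$, so the NA product domination does not apply directly when the $a_i$ have mixed signs --- equivalently, $(a_i\chi_i)_{i\le m}$ need not itself be an NA family. This is not a defect you introduced: the paper's proof has the same gap, and in fact the inequality with the constant $2$ in the exponent fails for mixed signs (take $m=2$, $k=1$, $a=(1,-1)$, $t=1$: the left-hand side equals $1$ while the right-hand side is $2e^{-1}<1$), so no argument can close it. Your proposed repairs are therefore the weak point of the write-up: the sign-partition/Cauchy--Schwarz route degrades the constant in the exponent (proving a version of the lemma, not the lemma as stated), and Hoeffding's convex-order reduction to sampling with replacement compares against i.i.d.\ copies of $a_I$ with $I$ uniform on $[m]$, whose Hoeffding variance proxy involves the range $(\max_i a_i-\min_i a_i)^2$ per draw rather than $\Vert a\Vert_2^2$, so it does not recover the claimed form either. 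None of this affects the paper: the lemma is invoked only in Lemma~\ref{lem: degree-event} with $a$ a row or column of a matrix with nonnegative entries, and in that case your argument (like the paper's) is complete, with all factors simultaneously monotone and the stated constant obtained.
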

\begin{proof}
Fix a $k\leq \lceil\frac{m}{2}\rceil$.
For each $i\leq m$, let $\chi_i$ be the indicator variable of the event $\{i\in\mathcal S\}$ so that
$\sum_{i\in\mathcal S}a_i=\sum_{i=1}^m a_i\chi_i$. 
It is known that $(\chi_i)_{i\leq n}$ are negatively associated (see for example \cite{NA} or \cite{Permantle}). 
Note that $\E\chi_i= \frac{k}{m}$ and that the random variable $a_i\chi_i$ lies in an interval of length
$\vert a_i\vert$ for any $i\leq m$. Hence, applying
Hoeffding's inequality for negatively associated random variables \cite{roussas}, we get the result.
\end{proof}
Note that a Bennett--type inequality can be derived in a similar manner. However, Lemma~\ref{l: subsum2}
is sufficient for our purposes.

\medskip

Finally, the following linear algebraic statement will be useful for us.
For the proof, see, for example, \cite[Corollary 8.1.30]{horn}.
\begin{lemma}\label{l: eigenvalue of positive vector}
Let $M$ be an $n\times n$ matrix with non-negative entries and let $\rho$ be its spectral radius.
Assume that $x$ is an eigenvector of $M$ and all coordinates of $x$ are strictly positive.
Then $Mx=\rho x$, i.e.\ the eigenvalue associated to $x$ is equal to the spectral radius.
\end{lemma}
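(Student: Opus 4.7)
The plan is to show that the eigenvalue $\lambda$ associated with the strictly positive eigenvector $x$ dominates $|\mu|$ for every eigenvalue $\mu$ of $M$, which combined with the trivial bound $\lambda\leq\rho$ gives equality. The positivity of $x$ will be used in exactly one place: to guarantee that a certain inner product is strictly positive.

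First I would record that $\lambda\geq 0$: since $M$ has non-negative entries and $x>0$ coordinatewise, the vector $Mx$ is coordinatewise non-negative; as $Mx=\lambda x$ and $x_1>0$, this forces $\lambda\geq 0$.

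Next, for the main step, I would pick any eigenvalue $\mu\in\mathbb{C}$ of $M$ and a corresponding (possibly complex) left eigenvector, i.e.\ a nonzero $z\in\mathbb{C}^n$ with $M^{t}z=\bar\mu\, z$. Applying the triangle inequality entrywise gives
\[
|\mu|\,|z_i|=\bigl|(M^{t}z)_i\bigr|=\Bigl|\sum_j M_{ji}z_j\Bigr|\leq \sum_j M_{ji}|z_j|=(M^{t}|z|)_i,
\]
so $M^{t}|z|\geq |\mu|\,|z|$ coordinatewise. Pairing with the strictly positive vector $x$ and using $Mx=\lambda x$,
\[
\lambda\,\langle x,|z|\rangle=\langle Mx,|z|\rangle=\langle x,M^{t}|z|\rangle\geq |\mu|\,\langle x,|z|\rangle.
\]
Here $|z|$ is non-negative and nonzero (since $z\neq 0$), and $x$ has all coordinates strictly positive, so $\langle x,|z|\rangle>0$. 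Dividing yields $\lambda\geq |\mu|$, and taking $\mu$ with $|\mu|=\rho$ gives $\lambda\geq\rho$. Since $\lambda$ is itself an eigenvalue of $M$, we automatically have $\lambda\leq\rho$, and the lemma follows.

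There is no real obstacle here; the only subtle point is to make sure the argument works for general non-negative $M$ (not merely irreducible ones), which is why I avoid any min-max / Collatz--Wielandt characterization and instead compare $\lambda$ directly with an arbitrary eigenvalue via a left eigenvector. The crucial use of strict positivity of $x$ is the step $\langle x,|z|\rangle>0$; without it, one could only conclude $\lambda\geq|\mu|$ under extra assumptions.
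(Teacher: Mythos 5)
Your proof is correct. Note that the paper does not prove this lemma at all: it simply cites \cite[Corollary 8.1.30]{horn}, where the standard argument runs through the Collatz--Wielandt-type bounds $\min_i (Mx)_i/x_i \leq \rho \leq \max_i (Mx)_i/x_i$ valid for any strictly positive $x$; when $Mx=\lambda x$ both sides collapse to $\lambda$, giving $\lambda=\rho$. Your route is genuinely different and equally valid: you pair the positive right eigenvector against a left eigenvector for an eigenvalue of maximal modulus, use the entrywise triangle inequality to get $M^{t}|z|\geq|\mu|\,|z|$, and exploit strict positivity of $x$ only to ensure $\langle x,|z|\rangle>0$. All the individual steps check out: $\lambda$ is real and non-negative since $Mx=\lambda x$ with $M\geq 0$ and $x>0$; $\bar\mu$ is an eigenvalue of $M^{t}$ because $M$ is real (conjugate pairs) and $M$, $M^{t}$ share a characteristic polynomial; and the duality identity $\langle Mx,|z|\rangle=\langle x,M^{t}|z|\rangle$ together with $x\geq 0$ lets you pass the coordinatewise inequality through the inner product. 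What your approach buys is a fully self-contained proof that works for arbitrary non-negative (not necessarily irreducible) matrices without invoking any min-max characterization; what the textbook route buys is brevity once the Collatz--Wielandt bounds are available. Either is acceptable here.
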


\section{Proof of Theorem~\ref{th: main}}

The next lemma is the key to prove the main theorem. 

\begin{lemma}\label{l: submatrix}
There is a universal constant $c_{\text{\tiny{\ref{l: submatrix}}}}>0$ with the following property:
Let $n\geq 8$, $M$ be an $n\times n$ non-random matrix with zero diagonal, and let $\sigma$ be a uniform
random permutation on $[n]$. Denote by $T$ the $\lfloor n/2\rfloor\times \lfloor n/2\rfloor$ submatrix of $\sigma(M)$
located in its top right corner. Then with probability at least $c_{\text{\tiny{\ref{l: submatrix}}}}$ we have
$$\|T\|\geq c_{\text{\tiny{\ref{l: submatrix}}}}\|M\|.$$
\end{lemma}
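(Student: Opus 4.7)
The plan is to prove the lemma by a direct second-moment computation. Rather than attempting a two-step conditioning scheme (pick the rows first, then the columns), which is genuinely awkward because the principal submatrix $M[I,I]$ can have spectral norm as large as $\|M\|$ itself (consider $M$ supported on a single off-diagonal pair and both indices landing in $I$), I compare $T$ against the top singular vectors of $M$ through a single bilinear form whose expectation is already of order $\|M\|$. The deterministic upper bound $\|T\|\le\|M\|$ then turns that second-moment lower bound into the desired probabilistic one, with no refined concentration inequality needed.

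Concretely, I assume $\|M\|>0$ (else the lemma is trivial) and fix unit vectors $x,y\in\R^{n}$ with $\langle Mx,y\rangle=\|M\|$. Writing
$$I:=\sigma(\{1,\dots,\lfloor n/2\rfloor\}),\qquad J:=\sigma(\{\lceil n/2\rceil+1,\dots,n\}),$$
the sets $I$ and $J$ are disjoint uniform random $\lfloor n/2\rfloor$-subsets of $[n]$, and since row/column reordering preserves the spectral norm we have $\|T\|=\|M[I,J]\|$. Introduce the random bilinear form
$$S:=\sum_{i\in I,\;j\in J} M_{ij}\,x_j y_i=\langle T\,x_J,\,y_I\rangle,$$
where $x_J=(x_j)_{j\in J}$ and $y_I=(y_i)_{i\in I}$ are the natural restrictions.

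Two inputs now do all the work. First, for any distinct $i,j\in[n]$ one has $\Prob[i\in I,\,j\in J]=\lfloor n/2\rfloor^{2}/(n(n-1))$, and since $I\cap J=\emptyset$ only pairs with $i\ne j$ contribute to $S$; using the zero-diagonal hypothesis to identify $\sum_{i\ne j}M_{ij}x_jy_i$ with $\langle Mx,y\rangle$, I obtain
$$\E[S]=\frac{\lfloor n/2\rfloor^{2}}{n(n-1)}\,\|M\|\geq c_0\,\|M\|$$
for a universal $c_0>0$ (the ratio is at least $2/9$ for $n\geq 8$). Second, Cauchy--Schwarz together with $\|x_J\|,\|y_I\|\leq 1$ gives the deterministic inequality $|S|\leq \|T\|\cdot\|x_J\|\cdot\|y_I\|\leq\|T\|$, so that
$$\E[\|T\|^{2}]\;\geq\;\E[S^{2}]\;\geq\;(\E[S])^{2}\;\geq\;c_{0}^{2}\,\|M\|^{2}.$$

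To conclude, I would use the trivial pointwise bound $\|T\|^{2}\leq \|M\|^{2}$ together with the truncation
$$\E[\|T\|^{2}]\;\leq\;\tfrac{c_{0}^{2}}{2}\|M\|^{2}+\|M\|^{2}\cdot\Prob\bigl[\|T\|^{2}\geq \tfrac{c_{0}^{2}}{2}\|M\|^{2}\bigr],$$
which combined with the previous display forces $\Prob\bigl[\|T\|\geq(c_{0}/\sqrt{2})\|M\|\bigr]\geq c_{0}^{2}/2$; setting $c_{\text{\tiny{\ref{l: submatrix}}}}:=\min(c_{0}/\sqrt{2},\,c_{0}^{2}/2)$ then yields the statement. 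I do not foresee any real obstacle; the only point requiring attention is the conceptual one of recognizing that the deterministic upper bound $\|T\|\leq\|M\|$ replaces the usual fourth-moment computation that would otherwise be needed to pass from $\E[S^{2}]$ to a probabilistic lower bound on $\|T\|$.
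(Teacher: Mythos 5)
Your proof is correct, and it takes a genuinely different and more elementary route than the paper. The paper works only with the top \emph{right} singular vector $x$: it restricts $x$ to the column block, bounds each coordinate of $TX$ from below via the anti-concentration Lemma~\ref{l: subsum} (which itself requires a second- and fourth-moment computation plus Paley--Zygmund), and then applies Paley--Zygmund a second time to the resulting sum $\eta$. You instead pair the top right and left singular vectors into the single bilinear form $S=\langle Tx_J,y_I\rangle$, whose \emph{first} moment is already $\frac{\lfloor n/2\rfloor^2}{n(n-1)}\|M\|\geq\frac{2}{9}\|M\|$ thanks to the zero diagonal (which is exactly where that hypothesis enters in both arguments), and convert this into a probability bound using only the pointwise inequality $|S|\leq\|T\|\leq\|M\|$ and a truncation; indeed, you could skip $\E[S^2]$ entirely and run the same truncation on $\E\|T\|\geq\E S$ for a slightly better constant. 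The computation of $\Prob[i\in I,\,j\in J]=\lfloor n/2\rfloor^2/(n(n-1))$ is right, as is the lower bound $2/9$ for $n\geq 8$ (attained at $n=9$). What your approach buys is the complete elimination of Lemma~\ref{l: subsum} and both Paley--Zygmund steps, together with explicit constants; the paper's route does not appear to gain anything for this particular lemma (the generality of Lemma~\ref{l: subsum} in the subset size $k$ is not exploited here), so your argument is a genuine simplification.
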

\begin{proof}
Let $x$ be a unit vector in $\R^n$ which realizes the norm of $M$, i.e.\ $\|Mx\|_2=\|M\|_2$.
Denote by $X$ the {\it random} $\lfloor n/2\rfloor$-dimensional vector defined as
$$X:=\bigl(x_{\sigma(n-\lfloor n/2\rfloor+1)},\ldots,x_{\sigma(n)}\bigr).$$
We will show that $\|TX\|_2\geq c_{\text{\tiny{\ref{l: submatrix}}}}\|M\|$ with probability at least $c_{\text{\tiny{\ref{l: submatrix}}}}$,
for a sufficiently small universal constant $c_{\text{\tiny{\ref{l: submatrix}}}}>0$.

Fix for a moment any $i\leq \lfloor n/2\rfloor$. Note that the inner product of the $i$-th row of $T$
with $X$ can be written as
$$\sum_{j=n-\lfloor n/2\rfloor+1}^n M_{\sigma(i)\sigma(j)}x_{\sigma(j)}.$$
Conditioned on any realization of $\sigma(i)$, the set $\bigl\{\sigma(j)\,:\,j>n-\lfloor n/2\rfloor\bigr\}$
is uniformly distributed in $[n]\setminus \{\sigma(i)\}$, whence,
by Lemma~\ref{l: subsum}, and because of the assumption on the matrix diagonal, we have
\begin{equation}\label{eq: aux 1640}
\Prob\Bigl\{\Bigl|\sum_{j=n-\lfloor n/2\rfloor+1}^n M_{\sigma(i)\sigma(j)}x_{\sigma(j)}\Bigr|\geq
c\Bigl|\sum_{j=1}^n M_{\sigma(i)\sigma(j)}x_{\sigma(j)}\Bigr|\;|\;\sigma(i)\Bigr\}\geq c,
\end{equation}
for some universal constant $c>0$. Next, observe that
\begin{align*}
\|TX\|_2^2&=\sum_{i=1}^{\lfloor n/2\rfloor}\biggl(\sum_{j=n-\lfloor n/2\rfloor+1}^n M_{\sigma(i)\sigma(j)}x_{\sigma(j)}\biggr)^2\\
&\geq c^2\sum_{i=1}^{\lfloor n/2\rfloor}\biggl(\sum_{j=1}^n M_{\sigma(i)\sigma(j)}x_{\sigma(j)}\biggr)^2\chi_i\\
&= c^2\sum_{i=1}^{\lfloor n/2\rfloor}\biggl(\sum_{j=1}^n M_{\sigma(i)j}x_{j}\biggr)^2\chi_i,
\end{align*}
where $\chi_i$ is the indicator of the event
$$\Bigl\{\Bigl|\sum_{j=n-\lfloor n/2\rfloor+1}^n M_{\sigma(i)\sigma(j)}x_{\sigma(j)}\Bigr|\geq
c\Bigl|\sum_{j=1}^n M_{\sigma(i)\sigma(j)}x_{\sigma(j)}\Bigr|\Bigr\}.$$
Therefore $\Vert TX\Vert^2_2\geq c^2\eta$ where $\eta$ is defined as 
$$\eta:=\sum_{i=1}^{\lfloor n/2\rfloor}\biggl(\sum_{j=1}^n M_{\sigma(i)j}x_{j}\biggr)^2\chi_i.$$
In view of \eqref{eq: aux 1640}, we have
$$\E\eta
\geq c\sum_{i=1}^{\lfloor n/2\rfloor}\E\biggl(\sum_{j=1}^n M_{\sigma(i)j}x_{j}\biggr)^2
=\frac{c\lfloor n/2\rfloor}{n}\|Mx\|_2^2=\frac{c\lfloor n/2\rfloor}{n}\|M\|^2.$$
On the other hand, deterministically
$$\eta\leq \sum_{i=1}^{n}\biggl(\sum_{j=1}^n M_{\sigma(i)j}x_{j}\biggr)^2\chi_i\leq\|M\|^2,$$
whence $\E\eta^2\leq \|M\|^2\E\eta$. Applying the Paley--Zygmund inequality to $\eta$, we obtain
$$\Prob\Bigl\{\eta\geq \frac{1}{2}\E\eta\Bigr\}\geq \frac{1}{4}\frac{\bigl(\E\eta\bigr)^2}{\E\eta^2}
\geq \frac{\E\eta}{4\|M\|^2}\geq \frac{c}{12}.$$
Since $\Vert TX\Vert^2_2\geq c^2\eta$ and $\E \eta \geq \frac{c}{3}\Vert M\Vert^2$ then 
$$\|TX\|_2^2\geq \frac{c^3}{6}\|M\|^2$$
with probability at least $\frac{c}{12}$, and the proof is complete.
\end{proof}

Equipped with the above lemma, we are now ready to prove Theorem~\ref{th: main}.

\begin{proof}[Proof of Theorem~\ref{th: main}]
Let $c_{\text{\tiny{\ref{th: main}}}}:=\frac{1}{2}c_{\text{\tiny{\ref{l: submatrix}}}}$,
and let $M$, $\sigma$, $T$ and event $\Event$ be as in the statement of the theorem.
Conditioning on any realization of $M$,
we get, by Lemma~\ref{l: submatrix},
$$\Prob\{\|T\|\geq c_{\text{\tiny{\ref{l: submatrix}}}}\|M\|\;|\;M\}\geq c_{\text{\tiny{\ref{l: submatrix}}}}.$$
Hence,
$$\Prob\{\|T\|\geq c_{\text{\tiny{\ref{l: submatrix}}}}\|M\|\mbox{ AND $\Event$ holds}\;|\;M\}
\geq \frac{c_{\text{\tiny{\ref{l: submatrix}}}}}{2}.$$
Now, fix any $\tau>0$ and denote $\Event_\tau:=\bigl\{\|M\|\geq \tau\bigr\}$.
Using the above relation, we obtain
$$
\Prob\{\|T\|\geq c_{\text{\tiny{\ref{l: submatrix}}}}\tau\mbox{ AND $\Event$ holds}\;|\;\Event_\tau\}\geq
\Prob\{\|T\|\geq c_{\text{\tiny{\ref{l: submatrix}}}}\|M\|\mbox{ AND $\Event$ holds}
\;|\;\Event_\tau\}\geq \frac{c_{\text{\tiny{\ref{l: submatrix}}}}}{2}.$$
It remains to use the definition of conditional probability.
\end{proof}

\section{The second singular value}

To deduce Theorem~\ref{th: main2} from Theorem~\ref{th: main}, we have to identify ``the right'' event for conditioning.
In the following lemma, we show that with high probability, the property of having constant row and column sums
is ``almost true'' for the top right corner of our random matrix provided that it is jointly exchangeable.

\begin{lemma}\label{lem: degree-event}
There exists a universal constant $C_{\text{\tiny{\ref{lem: degree-event}}}}>0$
with the following property. 
Let $A\in \A_n(d)$ be an $n\times n$ random matrix
and let $\delta>0$ be such that
$$C_{\text{\tiny{\ref{lem: degree-event}}}}\|\Row_i(A)\|_2,\,C_{\text{\tiny{\ref{lem: degree-event}}}}\|\Col_i(A)\|_2\leq \delta\;\;\mbox{a.s.}\quad\quad i\leq n.$$
Further, let $\sigma$ be the uniform random permutation on $[n]$ independent from $A$ and let $T_s$ be the
$\lfloor n/2\rfloor\times\lfloor n/2\rfloor$ top right corner of $\sigma(A)$. 
Then, with the event $\Event_{\text{\tiny{\ref{lem: degree-event}}}}$ defined by
\begin{align*}
\Event_{\text{\tiny{\ref{lem: degree-event}}}}
:=\Big\{&\big|\big\{i\leq n/2:\,\big|u_i(T_s)-\frac{d}{2}\big|> k\delta\big\}\big|
\leq n e^{-k^2}\mbox{ for all }k\in\N\;\;\mbox{ AND}\\
&\big|\big\{i\leq n/2:\,\big|v_i(T_s)-\frac{d}{2}\big|> k\delta\big\}\big|
\leq n e^{-k^2}\mbox{ for all }k\in\N\Big\},
\end{align*}
we have 
$$\Prob(\Event_{\text{\tiny{\ref{lem: degree-event}}}}\,|\,A)\geq 1-c_{\text{\tiny{\ref{th: main}}}}.$$
Here, $c_{\text{\tiny{\ref{th: main}}}}$ is the constant from Theorem~\ref{th: main}.
\end{lemma}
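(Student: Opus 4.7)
The plan is to fix an arbitrary realization of $A$ so that the only remaining randomness is the permutation $\sigma$, and to control each row/column sum of $T_s$ via the Hoeffding-type estimate in Lemma~\ref{l: subsum2}, then take a union bound over $i$ and $k$.

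Fix $A$, and fix $i\le \lfloor n/2\rfloor$. Writing
\[
u_i(T_s) = \sum_{r=1}^{\lfloor n/2\rfloor} A_{\sigma(r),\,\sigma(n-\lfloor n/2\rfloor+i)},
\]
I would condition on the value $\ell := \sigma(n-\lfloor n/2\rfloor+i)$. Then $\{\sigma(1),\ldots,\sigma(\lfloor n/2\rfloor)\}$ is a uniformly random subset of $[n]\setminus\{\ell\}$ of size $\lfloor n/2\rfloor$, and Lemma~\ref{l: subsum2} applies to the sequence $(A_{j,\ell})_{j\ne\ell}$, giving a subgaussian deviation estimate for $u_i(T_s)$ around its conditional mean $\frac{\lfloor n/2\rfloor}{n-1}(d - A_{\ell,\ell})$. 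The shift between this mean and $d/2$ is bounded by $\frac{d}{2(n-1)} + A_{\ell,\ell}$, and both terms are $O(\delta/C_{\text{\tiny{\ref{lem: degree-event}}}})$: indeed $A_{\ell,\ell}\le \|\Col_\ell(A)\|_2\le \delta/C_{\text{\tiny{\ref{lem: degree-event}}}}$, and the Cauchy--Schwarz bound $d = \|\Col_\ell(A)\|_1\le \sqrt{n}\,\|\Col_\ell(A)\|_2$ gives $\frac{d}{2(n-1)}\le \delta/C_{\text{\tiny{\ref{lem: degree-event}}}}$ for $n\ge 2$. Choosing $C_{\text{\tiny{\ref{lem: degree-event}}}}$ large enough so that this bias is at most $k\delta/2$ for every $k\ge 1$, Lemma~\ref{l: subsum2} with $\|\Col_\ell(A)\|_2\le \delta/C_{\text{\tiny{\ref{lem: degree-event}}}}$ yields
\[
\Prob\bigl\{|u_i(T_s) - d/2| > k\delta \,\big|\, A\bigr\} \le 2\exp(-c_0\, C_{\text{\tiny{\ref{lem: degree-event}}}}^{\,2}\, k^2)
\]
for all $k\in\N$ and a universal $c_0>0$.

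Applying Markov's inequality to $|\{i\le \lfloor n/2\rfloor : |u_i(T_s) - d/2| > k\delta\}|$, whose conditional expectation is at most $\lfloor n/2\rfloor\cdot 2\exp(-c_0 C_{\text{\tiny{\ref{lem: degree-event}}}}^{\,2} k^2)$, produces
\[
\Prob\bigl\{|\{i : |u_i(T_s) - d/2| > k\delta\}| > n e^{-k^2} \,\big|\, A\bigr\} \le \exp\bigl(-(c_0 C_{\text{\tiny{\ref{lem: degree-event}}}}^{\,2}-1)k^2\bigr).
\]
A union bound over $k\in\N$, together with the entirely parallel argument for $v_i(T_s) = \sum_{c=n-\lfloor n/2\rfloor+1}^n A_{\sigma(i),\sigma(c)}$ (conditioning this time on $\sigma(i)$ and using the row norms of $A$), bounds $\Prob(\Event_{\text{\tiny{\ref{lem: degree-event}}}}^c\,|\,A)$ by $2\sum_{k=1}^\infty\exp(-(c_0 C_{\text{\tiny{\ref{lem: degree-event}}}}^{\,2}-1)k^2)$, which can be made less than $c_{\text{\tiny{\ref{th: main}}}}$ by choosing $C_{\text{\tiny{\ref{lem: degree-event}}}}$ sufficiently large.

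The main technical point (and only real obstacle) is the bookkeeping of the conditional expectation: the hypothesis $\|\Col_\ell(A)\|_2\le\delta/C_{\text{\tiny{\ref{lem: degree-event}}}}$ does not directly control either $A_{\ell,\ell}$ or $d/(n-1)$, but combining the elementary bound $A_{\ell,\ell}\le \|\Col_\ell(A)\|_2$ with the Cauchy--Schwarz estimate $d\le \sqrt{n}\,\|\Col_\ell(A)\|_2$ makes both quantities $O(\delta/C_{\text{\tiny{\ref{lem: degree-event}}}})$, which is precisely what is needed for $k\delta$ to absorb the bias uniformly for all $k\ge 1$. After this, tuning $C_{\text{\tiny{\ref{lem: degree-event}}}}$ is routine.
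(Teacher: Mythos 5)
Your proposal is correct and follows essentially the same route as the paper: condition on $A$ and on the relevant value of $\sigma$, apply the Hoeffding-type bound of Lemma~\ref{l: subsum2} to the sampled row/column entries, absorb the bias into $k\delta$, and finish with Markov's inequality plus a union bound over $k\in\N$. If anything, your bookkeeping of the conditional mean (the $\tfrac{m}{n-1}$ versus $\tfrac12$ discrepancy and the diagonal term, both controlled via Cauchy--Schwarz and $\|\Col_\ell(A)\|_2\le\delta/C_{\text{\tiny{\ref{lem: degree-event}}}}$) is more explicit than the paper's, which passes from the true conditional mean to $\tfrac{dm}{n}$ and then to $\tfrac d2$ with less comment.
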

\begin{proof}
Set
$$C_{\text{\tiny{\ref{lem: degree-event}}}}:= 4\ln (8/c_{\text{\tiny{\ref{th: main}}}}).$$ 
First, let us condition on any realizaton $\widetilde A$ of the matrix $A$,
so that the randomness comes only from the permutation $\sigma$.
By the definition of $\sigma(A)$ and $T_s$, we have
$$\Row_1(T_s)=\big(a_{\sigma(1)\sigma(n-\lfloor n/2\rfloor+1)},\dots,a_{\sigma(1)\sigma(n)}\big).$$
Since $\sigma$ is uniformly distributed on $[n]$, conditioning on an event $\sigma_1=k$ for any fixed $k\leq n$,
the set $\{\sigma(n-\lfloor n/2\rfloor+1),\dots,\sigma(n)\}$ is uniformly distributed on $[n]\setminus\{k\}$.
Thus, conditioned {\it both} on $A=\widetilde A$ and $\sigma(1)=k$, we have
that $v_1(T_s)=\Vert\Row_1(T_s)\Vert_1$ is equidistributed with the random sum
$$\sum_{j\in \mathcal S}\widetilde a_{kj},$$
where $(\widetilde a_{kj})_{j\leq n}$ is the $k$-th row of $\widetilde A$, and
$\mathcal S$ is a uniform random subset of $[n]\setminus\{k\}$ of cardinality $m:=\lfloor n/2\rfloor$.
Hence, from Lemma~\ref{l: subsum2}, we have for any $t>0$:
\begin{align*}
\Prob\Bigl\{\bigl| v_1(T_s)-\frac{m}{n}\sum_{j\neq k}\widetilde a_{kj}\bigr|>t\;|\;A=\widetilde A\mbox{ AND }\sigma(1)=k\Bigr\}
&\leq 2\exp\bigg(-\frac{2\, t^2}{\|\Row_k(\widetilde A)\|_2^2}\bigg)\\
&\leq 2\exp\Big(-\frac{2{C_{\text{\tiny{\ref{lem: degree-event}}}}}^2\, t^2}{\delta^2}\Big).
\end{align*}
Note that, in view of the choice of $\delta$, we have $a_{kk}\leq \frac{\delta}{C_{\text{\tiny{\ref{lem: degree-event}}}}}$
Thus, we can simply write
\begin{equation}\label{eq: concentration-outdeg}
\Prob\Bigl\{\bigl| v_1(T_s)-\frac{dm}{n}\bigr|>t+\frac{\delta}{C_{\text{\tiny{\ref{lem: degree-event}}}}}\;|\;A=\widetilde A\mbox{ AND }\sigma(1)=k\Bigr\}
\leq 2\exp\Big(-\frac{2{C_{\text{\tiny{\ref{lem: degree-event}}}}}^2\, t^2}{\delta^2}\Big).
\end{equation}
Obviously, analogous estimates hold for all rows of $T_s$.
For any $t>0$, denote by $\eta_t$ the random variable
$$\eta_t:=\Big|\Big\{i\leq n/2:\, \bigl| v_i(T_s)-\frac{dm}{n}\bigr|>\frac{t\, \delta}{2}+\frac{\delta}{C_{\text{\tiny{\ref{lem: degree-event}}}}}\Big\}\Big|.$$
Then, in view of \eqref{eq: concentration-outdeg}, for all $t>0$ we have
$$\E(\eta_t\,|\,A=\widetilde A)
=\sum_{i=1}^{m}\Prob\Big\{\bigl| v_i(T_s)-\frac{dm}{n}\bigr|>\frac{t\, \delta}{2}+\frac{\delta}{C_{\text{\tiny{\ref{lem: degree-event}}}}}\,|\,A=\widetilde A\Big\}
\leq 2m\exp\Big(-\frac{{C_{\text{\tiny{\ref{lem: degree-event}}}}}^2t^2}{2}\Big).
$$
Applying Markov's inequality, we obtain
$$\Prob\big\{\eta_t> me^{-t^2}\,|\,A=\widetilde A\big\}\leq 2\exp\Big(t^2-\frac{{C_{\text{\tiny{\ref{lem: degree-event}}}}}^2t^2}{2}\Big),\quad t>0.$$
Hence, summing up over all natural $t$, we get
$$\Prob\big\{\eta_t> me^{-t^2}\mbox{ for some }t\in\N\,|\,A=\widetilde A\big\}
\leq 2\sum_{t=1}^\infty\exp\Big(t^2-\frac{{C_{\text{\tiny{\ref{lem: degree-event}}}}}^2t^2}{2}\Big)\leq \frac{c_{\text{\tiny{\ref{th: main}}}}}{2},$$
by the choice of $C_{\text{\tiny{\ref{lem: degree-event}}}}$.
In terms of $v(T_s)$, the condition can be rewritten as
$$\Prob\Big\{\big|\big\{i\leq n/2:\,\big|v_i(T_s)-\frac{d}{2}\big|> k\delta\big\}\big|
\leq me^{-k^2}\mbox{ for all }k\in\N\,|\,A=\widetilde A\Big\}
\geq 1-\frac{c_{\text{\tiny{\ref{th: main}}}}}{2}.$$
Obviously, the same probabilistic relation holds for vector $u(T_s)$.
The result follows.
\end{proof}

The next linear algebraic lemma can be viewed as an extension of relation \eqref{eq: s2-courant-fischer}
to the case when a matrix has ``almost constant'' row and column sums.
\begin{lemma}\label{lem: s2-reduction}
Let $m\in\N$ and let $u,v$ be two vectors in $\R^m$ with strictly positive coordinates and $\|u\|_1=\|v\|_1$.
Further, assume that $d,\delta>0$ are such that
$$\|u-d{\bf 1}\|_\infty,\|v-d{\bf 1}\|_\infty\leq \frac{d}{3}$$
and
$$\|u-d{\bf 1}\|_2,\|v-d{\bf 1}\|_2\leq \delta\sqrt{m}.$$
Then for any $A\in\A_m(u,v)$ we have
$$
 \big\Vert A-\frac{d}{m}{\bf 1}\cdot {\bf 1}^t\big\Vert \leq 2\, s_2(A) +6\delta.
$$ 
\end{lemma}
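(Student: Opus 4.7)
The plan is to reduce to a matrix with \emph{exactly} constant row and column sums equal to $d$, for which \eqref{eq: s2-courant-fischer} applies directly, and then compare. Define the regularized matrix
$$
\tilde A := A - \frac{1}{m}(v-d\mathbf{1})\mathbf{1}^t - \frac{1}{m}\mathbf{1}(u-d\mathbf{1})^t + \frac{\bar u - d}{m}\mathbf{1}\mathbf{1}^t,
$$
where $\bar u := \mathbf{1}^t u/m$. The constraint $\|u\|_1=\|v\|_1$ forces $\bar u=\bar v$, and a direct computation shows that $\tilde A\mathbf{1}=d\mathbf{1}$ and $\tilde A^t\mathbf{1}=d\mathbf{1}$. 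In particular, $\mathbf{1}/\sqrt m$ is both a left and a right singular vector of $\tilde A$ associated to the singular value $d$. Each of the three correction terms above is rank at most one and has operator norm at most $\delta$ (using $\|u-d\mathbf{1}\|_2,\|v-d\mathbf{1}\|_2\leq\delta\sqrt m$ and $|\bar u-d|\leq \|u-d\mathbf{1}\|_2/\sqrt m\leq\delta$), so $\|A-\tilde A\|\leq 3\delta$.

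Because $(\mathbf{1}/\sqrt m,\mathbf{1}/\sqrt m)$ is a singular pair of $\tilde A$ with value $d$, subtracting $(d/m)\mathbf{1}\mathbf{1}^t$ cancels exactly the corresponding rank-one summand of the SVD of $\tilde A$, and the singular values of $\tilde A-(d/m)\mathbf{1}\mathbf{1}^t$ are the remaining singular values of $\tilde A$. I would then split on whether $d=s_1(\tilde A)$ or $d<s_1(\tilde A)$.

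If $d=s_1(\tilde A)$, then $\|\tilde A-(d/m)\mathbf{1}\mathbf{1}^t\|=s_2(\tilde A)\leq s_2(A)+3\delta$ by Weyl's inequality, and the triangle inequality $\|A-(d/m)\mathbf{1}\mathbf{1}^t\|\leq \|\tilde A-(d/m)\mathbf{1}\mathbf{1}^t\|+\|A-\tilde A\|$ yields the claim. If on the other hand $d<s_1(\tilde A)$, then $\|\tilde A-(d/m)\mathbf{1}\mathbf{1}^t\|=s_1(\tilde A)$; since the singular value $d$ associated to $\mathbf{1}/\sqrt m$ must then appear in position two or later, $s_2(\tilde A)\geq d$. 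The non-negativity of $A$ together with the $\ell_\infty$ bounds $\|u-d\mathbf{1}\|_\infty,\|v-d\mathbf{1}\|_\infty\leq d/3$ gives $s_1(A)\leq\sqrt{\|u\|_\infty\|v\|_\infty}\leq 4d/3$, and hence $s_1(\tilde A)\leq 4d/3+3\delta$. Combined with $s_2(A)\geq s_2(\tilde A)-3\delta\geq d-3\delta$, the ratio bound $s_1(\tilde A)/s_2(\tilde A)\leq 4/3+O(\delta/d)\leq 2$ yields $s_1(\tilde A)\leq 2s_2(\tilde A)\leq 2s_2(A)+6\delta$, closing the case after one more triangle inequality.

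The delicate point is the second case: the factor $2$ multiplying $s_2(A)$ is produced precisely by the $\ell_\infty$ hypothesis, which forces $s_1(A)/d\leq 4/3<2$, while the factor $6$ in the $\delta$-term requires tracking carefully the three sources of error (the norm of $A-\tilde A$, and the Weyl perturbations of $s_1$ and $s_2$). In the regime where $\delta$ is comparable to $d$ the dichotomy above is not tight, but the conclusion holds anyway from the trivial estimate $\|A-(d/m)\mathbf{1}\mathbf{1}^t\|\leq\|A\|+d\leq 7d/3$.
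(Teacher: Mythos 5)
Your regularization is genuinely different from the paper's argument. You correct $A$ \emph{additively} by rank-one terms to obtain $\tilde A$ with exact row and column sums $d$, then compare spectra via Weyl's inequality; the paper instead rescales \emph{multiplicatively}, passing to $D_v^{-1/2}AD_u^{-1/2}$ for the diagonal matrices $D_u,D_v$ of column and row sums, identifies its top singular value as exactly $1$ with positive singular vectors ${D_u}^{1/2}{\bf 1}$, ${D_v}^{1/2}{\bf 1}$ via the Perron--Frobenius-type Lemma~\ref{l: eigenvalue of positive vector}, subtracts the top rank-one SVD component, and undoes the scaling at the cost of the factor $\Vert {D_v}^{1/2}\Vert\,\Vert {D_u}^{1/2}\Vert\,\Vert {D_v}^{-1/2}\Vert\,\Vert {D_u}^{-1/2}\Vert\le 2$. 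That route needs no case analysis on where $d$ sits in the spectrum of the regularized matrix and, crucially, no relation between $\delta$ and $d$. Your computations that $\tilde A{\bf 1}=\tilde A^t{\bf 1}=d{\bf 1}$, that $\|A-\tilde A\|\le 3\delta$, and your first case ($d=s_1(\tilde A)$, which in fact yields the stronger bound $s_2(A)+6\delta$) are all correct.

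The gap is in your second case, and it is quantitative but real. First, the chain $s_1(\tilde A)\le 2s_2(\tilde A)\le 2s_2(A)+6\delta$ followed by ``one more triangle inequality'' outputs $2s_2(A)+9\delta$, not $6\delta$: the Weyl perturbation of $s_2$ gets doubled by the ratio factor and the final $3\delta$ from $\|A-\tilde A\|$ is added on top. The fix is to use $s_1(\tilde A)\le s_1(A)+3\delta\le \tfrac{4d}{3}+3\delta$ directly, giving $\|A-\tfrac{d}{m}{\bf 1}\cdot{\bf 1}^t\|\le \tfrac{4d}{3}+6\delta$, which is at most $2s_2(A)+6\delta$ provided $s_2(A)\ge \tfrac{2d}{3}$; from $s_2(A)\ge s_2(\tilde A)-3\delta\ge d-3\delta$ this requires $\delta\le d/9$. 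Your fallback, $\|A-\tfrac{d}{m}{\bf 1}\cdot{\bf 1}^t\|\le \tfrac{7d}{3}$, only closes the claim when $6\delta\ge \tfrac{7d}{3}$, i.e.\ $\delta\ge \tfrac{7d}{18}$. In the window $d/9<\delta<7d/18$ neither argument applies: the best upper bound you have on the left-hand side is $\tfrac{7d}{3}$, while the best lower bound on the right-hand side is $2(d-3\delta)+6\delta=2d<\tfrac{7d}{3}$, and convex combinations of your two estimates do not bridge this. The lemma is still true there --- the hypotheses impose no upper bound on $\delta/d$, and the paper's scaling argument covers all $\delta$ uniformly --- but your proof as written does not establish it in that range, and the closing sentence claiming the trivial estimate handles the remaining regime is incorrect.
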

\begin{proof}
Denote by
$D_{u}$ and $D_{v}$ diagonal matrices having 
$u_i$ and $v_i$ ($i\leq m$) as diagonal elements. 
First note that ${D_u}^{\frac12}{\bf 1}$ is an eigenvector of the matrix
$$({D_v}^{-\frac12}A{D_u}^{-\frac12})^t
{D_v}^{-\frac12}A{D_u}^{-\frac12}={D_u}^{-\frac12} A^t {D_v}^{-1}A{D_u}^{-\frac12},$$
with the corresponding eigenvalue equal to $1$,
and, obviously, ${D_u}^{\frac12}{\bf 1}$ has positive coordinates.
Hence, by Lemma~\ref{l: eigenvalue of positive vector},
$$
\lambda_{\max}\big({D_u}^{-\frac12} A^t {D_v}^{-1}A {D_u}^{-\frac12}\big)=1.
$$
Thus, $s_1({D_{v}}^{-\frac12} A{D_{u}}^{-\frac12})=1$, and ${D_{u}}^{\frac12}\, {\bf 1}$ and
${\bf 1}^t {D_{v}}^{\frac12}$
are the associated right and left singular vectors.
Moreover, 
$$\Vert {D_{u}}^{\frac12}\, {\bf 1}\Vert_2^2=\Vert {D_{v}}^{\frac12}\, {\bf 1}\Vert_2^2= \Vert u\Vert_1=\Vert v\Vert_1.$$
Therefore, applying the singular value decomposition, we get
$$
s_2\big({D_{v}}^{-\frac12} A{D_{u}}^{-\frac12}\big) =
\Big\Vert {D_{v}}^{-\frac12} A{D_{u}}^{-\frac12}- \frac{{D_{v}}^{\frac12}\, {\bf 1}\cdot{\bf 1}^t\, {D_{u}}^{\frac12}}
{\Vert u\Vert_1} \Big\Vert.
$$
From the last relation and by the triangle inequality, denoting by $\beta$ the expression
$$\beta:=\Big\Vert \frac{D_{v}\, {\bf 1}\cdot{\bf 1}^t\, D_{u}}
{\Vert u\Vert_1} -\frac{d}{m}{\bf 1}\cdot {\bf 1}^t \Big\Vert,$$
we obtain
\begin{align}\label{eq1: s2-reduction}
\big\Vert A-\frac{d}{m}{\bf 1}\cdot {\bf 1}^t\big\Vert
&\leq \Big\Vert A- \frac{D_{v}\, {\bf 1}\cdot{\bf 1}^t\, D_{u}}
{\Vert u\Vert_1} \Big\Vert +\beta\nonumber\\
&\leq \Vert {D_{v}}^{\frac12}\Vert\, \Vert {D_{u}}^{\frac12}\Vert\, s_2\big({D_{v}}^{-\frac12} A{D_{u}}^{-\frac12}\big) 
+ \beta\nonumber\\
&\leq \Vert {D_{v}}^{\frac12}\Vert\, \Vert {D_{u}}^{\frac12}\Vert\, \Vert {D_{v}}^{-\frac12}\Vert\, \Vert {D_{u}}^{-\frac12}\Vert\, 
s_2(A)+ \beta,
\end{align}
where in the last step we used the relation
$s_2\big(D_{v}^{-\frac12} AD_{u}^{-\frac12}\big)\leq  \Vert D_{v}^{-\frac12}\Vert\, \Vert D_{u}^{-\frac12}\Vert\, s_2(A)$. 
By the assumptions on vectors $u,v$ and on $d$, we have
\begin{equation}\label{eq: bounds-in-d}
\frac{2d}{3}\leq \min\{u_i,v_i,\;i\leq m\}\leq \max\{u_i,v_i,\;i\leq m\}\leq \frac{4d}{3}.
\end{equation}
Hence, by \eqref{eq1: s2-reduction}, we get 
\begin{equation}\label{eq2: s2-reduction} 
\big\Vert A-\frac{d}{m}{\bf 1}\cdot {\bf 1}^t\big\Vert\leq 2\, s_2(A)+  \Big\Vert \frac{D_{v}\, {\bf 1}\cdot{\bf 1}^t\, D_{u}}
{\Vert u\Vert_1} -\frac{d}{m}{\bf 1}\cdot {\bf 1}^t \Big\Vert.
\end{equation}
It remains to estimate $\beta$.
By the triangle inequality,
\begin{align*}
\beta&\leq  \Big\Vert \frac{(D_{v}-d\, I)\, {\bf 1}\cdot{\bf 1}^t\, D_{u}}
{\Vert u\Vert_1}\Big\Vert + \Big\Vert \frac{d\, {\bf 1}\cdot{\bf 1}^t\, (D_{u}-d\, I)}
{\Vert u\Vert_1}\Big\Vert +\Big\Vert \frac{d^2 }{\Vert u\Vert_1}{\bf 1}\cdot {\bf 1}^t  -\frac{d}{m}{\bf 1}\cdot {\bf 1}^t \Big\Vert\\
&= \frac{\Vert v-d{\bf 1}\Vert_2\, \Vert u \Vert_2}{\Vert u\Vert_1}+ 
\frac{d\sqrt{m}\, \Vert u-d{\bf 1}\Vert_2}{\Vert u\Vert_1} +d\, \frac{\big\vert md-\Vert u\Vert_1\big\vert}{\Vert u\Vert_1},
\end{align*}
where we used the identity $\Vert yz^t\Vert = \Vert y\Vert_2\, \Vert z\Vert_2$ which holds for any two vectors $y,z\in\R^m$. 
By the assumptions on $u$ and $v$ and a standard relation between $\|\cdot\|_2$ and $\|\cdot\|_1$-norms, we have
$$
\Vert u-d{\bf 1}\Vert_2, \Vert v-d{\bf 1}\Vert_2\leq \delta\, \sqrt{m}\quad \text{and}\quad 
\Vert u-d{\bf 1}\Vert_1\leq \delta\, m.
$$
This, together with \eqref{eq: bounds-in-d}, implies that 
$
\beta\leq 6 \delta.
$
Combining this relation with \eqref{eq2: s2-reduction}, we finish the proof.
\end{proof}

\begin{proof}[Proof of Theorem~\ref{th: main2}]
Let $A\in \A_n(d)$ be an $n\times n$ jointly exchangeable random matrix
and let $\delta>0$ be such that
$$C_{\text{\tiny{\ref{lem: degree-event}}}}\|\Row_i(A)\|_2,\,C_{\text{\tiny{\ref{lem: degree-event}}}}\|\Col_i(A)\|_2\leq \delta\;\;\mbox{a.s.}\quad\quad i\leq n,$$
where the constant $C_{\text{\tiny{\ref{lem: degree-event}}}}$ comes from Lemma~\ref{lem: degree-event},
and such that
\begin{equation}\label{eq: aux 124}
d/\sqrt{\ln n}\geq 100\delta.
\end{equation}
Let $\sigma$ be a uniform random permutation on $[n]$ independent from $A$.
In view of the definition of the joint exchangeability, $\sigma(A)$
is equidistributed with $A$. In particular, denoting by $T_s$ the $\lfloor n/2\rfloor\times\lfloor n/2\rfloor$ top right corner of $\sigma(A)$,
and by $T$ --- the $\lfloor n/2\rfloor\times\lfloor n/2\rfloor$ top right corner of $A$,
we get that $T$ and $T_s$ are equidistributed.
Moreover, we clearly have $s_2(A)=s_2(\sigma(A))$ deterministically.
Thus, to prove the theorem, it is enough to show that for all $L\geq C$ (for a sufficiently large constant $C$) we have
$$\Prob\big\{s_2(A)\geq L\delta\big\}\leq \frac{1}{c}\Prob\Big\{ s_2(T_s)\geq cL\delta \mbox{ \ AND }
\big(u(T_s),v(T_s)\big)\in \degreeset_{\lfloor n/2\rfloor}\big(d/2,\delta\big)\Big\}.
$$
By \eqref{eq: s2-courant-fischer}, we have
$
s_2(A)= \Vert A-\frac{d}{n}{\bf 1}\cdot{\bf 1}^t\Vert.
$
It is easy to see that the matrix
${\rm Diag}(A-\frac{d}{n}{\bf 1}\cdot{\bf 1}^t)$ has norm at most $\max\limits_{i\leq n}\|\Row_i(A)\|_2$, whence 
\begin{equation}\label{eq: s2-triangle-ineq}
s_2(A)\leq \max\limits_{i\leq n}\|\Row_i(A)\|_2+\Vert B\Vert\leq \delta +\Vert B\Vert, 
\end{equation}
where $B:= A-\frac{d}{n}{\bf 1}\cdot{\bf 1}^t- {\rm Diag}\big(A-\frac{d}{n}{\bf 1}\cdot{\bf 1}^t\big)$. 

Note that the $\lfloor n/2\rfloor\times\lfloor n/2\rfloor$ top right corner of $\sigma( B)$ 
is equal to $T_s -\frac{d}{n}{\bf 1}\cdot{\bf 1}^t$, where the vector ${\bf 1}$
is assumed to have dimension $\lfloor n/2\rfloor$.
Applying Theorem~\ref{th: main} to matrix $B$ and event $\Event_{\text{\tiny{\ref{lem: degree-event}}}}$
(taken from Lemma~\ref{lem: degree-event}), we get 
$$
f(\tau):=\Prob\bigl\{\|B\|\geq \tau\bigr\}
\leq \frac{1}{c_{\text{\tiny{\ref{th: main}}}}}
\Prob\Bigl\{\big\|T_s-\frac{d}{n}{\bf 1}\cdot{\bf 1}^t\big\|\geq c_{\text{\tiny{\ref{th: main}}}}\tau\mbox{ AND }
\Event_{\text{\tiny{\ref{lem: degree-event}}}}\mbox{ holds}\Bigr\}$$
for all $\tau>0$.
Note that, in view of \eqref{eq: aux 124} and the definition of $\Event_{\text{\tiny{\ref{lem: degree-event}}}}$, we have
$$\Big\{\big\|u(T_s)-\frac{d}{2}{\bf 1}\big\|_\infty,\,\big\|v(T_s)-\frac{d}{2}{\bf 1}\big\|_\infty\leq\frac{d}{6}\}\supset 
\Event_{\text{\tiny{\ref{lem: degree-event}}}}.$$
Further, it is easy to see that
$$\Big\{\big\|u(T_s)-\frac{d}{2}{\bf 1}\big\|_2,\,\big\|v(T_s)-\frac{d}{2}{\bf 1}\big\|_2\leq 4\delta\sqrt{n}\Big\}\supset 
\Event_{\text{\tiny{\ref{lem: degree-event}}}}.$$
Hence, by Lemma~\ref{lem: s2-reduction}, we get 
$
f(\tau)
\leq  \frac{1}{c_{\text{\tiny{\ref{th: main}}}}}\Prob\{s_2(T_s)\geq \frac{c_{\text{\tiny{\ref{th: main}}}}}{2}\tau-24\delta \mbox{ AND }
\Event_{\text{\tiny{\ref{lem: degree-event}}}}\mbox{ holds}\}.
$
Together with \eqref{eq: s2-triangle-ineq}, it gives
$$
\Prob\bigl\{s_2(A)\geq \delta+\tau\bigr\}\leq \frac{1}{c_{\text{\tiny{\ref{th: main}}}}}
\Prob\Big\{s_2(T_s)\geq \frac{c_{\text{\tiny{\ref{th: main}}}}}{2}\tau-24\delta \mbox{ AND }
\Event_{\text{\tiny{\ref{lem: degree-event}}}}\mbox{ holds}\Big\},\quad \tau>0.
$$
The result follows.
\end{proof}

\bigskip

{\bf Acknowledgments.}
The first named author is partially supported by the Simons Foundation ({\it{}Collaboration on Algorithms and Geometry}).

\bigskip

\noindent {\small Konstantin Tikhomirov,}\\
{\small Department of Mathematics, Princeton University,}\\
{\small E-mail: kt12@math.princeton.edu}

\bigskip

\noindent {\small Pierre Youssef,}\\
{\small Laboratoire de Probabilit\'es et de Mod\`eles al\'eatoires,
Universit\'e Paris Diderot,}\\
{\small E-mail: youssef@math.univ-paris-diderot.fr}

\end{document}